\newcommand{\CC}{{\mathbb C}}
\def\bege{\begin{equation}} \def\ende{\end{equation}}
\def\begr{\begin{eqnarray}} \def\endr{\end{eqnarray}}
\def\CC{ \mathbb{C}}
\def\D{\mathbb{D}}
\def\N{\mathbb N}
\def\hD{\hat{\mathcal{D}}}
\def\dD{\mathcal{D}}
\def\vp{\varphi}
\def\om{\omega}
\def\p{{\prime}}
\def\up{\upsilon}
\def\lmd{\lambda}
\def\begr{\begin{eqnarray}} \def\endr{\end{eqnarray}}
\def\ol{\overline}
\newtheorem{Lemma}{Lemma}
\newtheorem{Theorem}[Lemma]{Theorem}
\newtheorem{Remark}[Lemma]{Remark}
\newcounter{other}            
\newtheorem{otherth}[other]{Theorem}              
\begin{document}
\title[]{A note on Stevi\'c-Sharma type operators between Bergman spaces with doubling weights }

\author{ Juntao Du,  Songxiao Li$\dagger$  and Zuoling Liu}

\address{Juntao Du\\ Department of mathematics, Guangdong University of Petrochemical Technology, Maoming, Guangdong, 525000,  P. R. China.}
 \email{jtdu007@163.com  }

\address{Songxiao Li\\ Department of mathematics, Shantou University, Shantou, Guangdong, 515063,  P. R. China.}
\email{jyulsx@163.com}
\address{Zuoling Liu \\  School of Mathematics, Jiaying University, Meizhou, Guangdong, 514015,  P. R. China }
\email{zlliustu@163.com}

 \subjclass[2010]{30H20, 47B10, 47B35}
 \begin{abstract}
 Using some estimates in [J. Funct. Anal. {\bf 278}(2020), Article No. 108401],
  we completely characterized the boundedness and compactness of the Stevi\'c-Sharma type operators with  different weights and different composition symbols between Bergman spaces induced by two-sides doubling weights.
 \thanks{$\dagger$ Corresponding author.}
 \thanks{The work was supported by NNSF of China (Nos. 12371131 and 12271328),  Guangdong Basic and Applied Basic Research Foundation (No. 2022A1515012117) and Projects of Talents Recruitment of GDUPT(No. 2022rcyj2008), Project of Science and Technology of Maoming (No. 2023417) and STU Scientific Research Initiation Grant (No. NTF23004).}
 \vskip 3mm \noindent{\it Keywords}: Stevi\'c-Sharma type operator; differentiation composition operator;  Bergman space; doubling weight.
\end{abstract}
 \maketitle

\section{Introduction}
Let \( H(\mathbb{D}) \) be the space of all analytic functions defined in the open unit disk \( \mathbb{D} = \{ z \in \mathbb{C} : |z| < 1 \} \). For a nonnegative function \( \omega \in L^1([0, 1]) \), its extension to \( \mathbb{D} \), given by \( \omega(z) = \omega(|z|) \) for every \( z \in \mathbb{D} \), is known as a radial weight. The collection of doubling weights, represented by \( \hat{\mathcal{D}} \), includes all radial weights \( \omega \) that satisfy (see \cite{Pj2015})
 $$\hat{\omega}(r)\leq C \hat{\omega}\left(\frac{r+1}{2}\right),\,\,\,0\leq r<1$$
for a constant $C=C(\omega)\geq1$. Here  $\hat{\om}(z)=\int_{|z|}^1\om(t)dt $.
Additionally, if \( \omega \in \hat{\mathcal{D}} \) and satisfies the inequality
\[
\hat{\omega}(r) \geq C \hat{\omega}\left(1 - \frac{1 - r}{K}\right), \quad 0 \leq r < 1,
\]
for some constants \( K = K(\omega) > 1 \) and \( C = C(\omega) > 1 \), we refer to \( \omega \) as a two-sided doubling weight and denote it by \( \omega \in \mathcal{D} \). For any \( \lambda \in \mathbb{D} \), the Carleson square at \( \lambda \) is defined as
\[
S(\lambda) = \left\{ re^{i\theta} : |\lambda| \leq r < 1, \, |\text{Arg } \lambda - \theta| < \frac{1 - |\lambda|}{2} \right\}.
\]
For a radial weight \( \omega \), we define \( \omega(S(\lambda)) = \int_{S(\lambda)} \omega(z) \, dA(z) \). It is evident that \( \omega(S(\lambda)) \approx (1 - |\lambda|) \hat{\omega}(\lambda) \). For further properties of doubling weights, refer to \cite{Pj2015, PjRj2021adv, PjRjSk2021jga} and the references therein.

For \( 0 < p < \infty \) and a given \( \omega \in \hat{\mathcal{D}} \), the Bergman space \( A_\omega^p \) generated by a doubling weight \( \omega \) comprises all functions \( f \in H(\mathbb{D}) \) that satisfy
\[
\|f\|_{A_\omega^p}^p = \int_{\mathbb{D}} |f(z)|^p \omega(z) \, dA(z) < \infty,
\]
where \( dA \) represents the normalized area measure on \( \mathbb{D} \). We denote \( A_\alpha^p \) as the standard weighted Bergman space associated with the radial weight \( \omega(z) = (\alpha + 1)(1 - |z|^2)^\alpha \) for \( -1 < \alpha < \infty \). In this paper, we will assume that \( \hat{\omega}(z) > 0 \) for all \( z \in \mathbb{D} \); otherwise, \( A_\omega^p = H(\mathbb{D}) \).

Let \( S(\mathbb{D}) \) denote the set of all analytic self-maps of \( \mathbb{D} \). For \( n \in \mathbb{N} \cup \{0\} \), \( \varphi \in S(\mathbb{D}) \), and \( u \in H(\mathbb{D}) \), the generalized weighted composition operator \( uD_\varphi^{(n)} \) is defined as
\[
uD_\varphi^{(n)} f = u \left( f^{(n)} \circ \varphi \right), \quad f \in H(\mathbb{D}).
\]
The operator \( uD_\varphi^{(n)} \) was introduced by Zhu in \cite{ZXL2007}. This generalized weighted composition operator is also referred to as a weighted differentiation composition operator (see \cite{st1, st2, st3}). Specifically, when \( n = 0 \), \( uD_\varphi^{(n)} \) reduces to the weighted composition operator \( uC_\varphi \). Furthermore, when \( n = 0 \) and \( u \equiv 1 \), \( uD_\varphi^{(n)} \) becomes the composition operator \( C_\varphi \).
For more discussion on composition operators and  weighted composition operators, we refer to \cite{CM1995,EK2023, GMR2023,SU2014, Zhu1} and the references therein.  When $u\equiv 1$, $uD_\vp^{(n)}$ is the differentiation  composition operator $D_\vp^{(n)}$. When $u\equiv 1$ and $\vp(z)=z$, $uD_\vp^{(n)}$ is the $n$-th differentiation operator $D^{(n)}$.  So,  the generalized weighted composition operator   attracted a lot of attentions since it covers a lot of classical operators. See \cite{st1, st2, st3, ZXL2007, zxl2, zxl5, ZXL2019} for further information and results on generalized weighted composition operators on analytic function spaces.

In 2011, Stevi\'c, Sharma, and Bhat \cite{SsSaBa2011amc,SsSaBa2011amc-2} presented an operator defined as
$$T_{u_0,u_1,\vp}=u_0D_\vp^{(0)}+u_1 D_\vp^{(1)}$$
and explored its boundedness, compactness, and essential norm for the mapping $T_{u_0,u_1,\vp}:A_\alpha^p\to A_\alpha^p$ under the condition that
\begin{align}\label{0914-1}
u_0\in H^\infty\,\,\,\mbox{or}\,\,\,\sup_{z\in\D}\frac{|u_1(z)|}{1-|\vp(z)|^2}<\infty.
\end{align}
For further studies related to these operators, see, for instance, \cite{FG2022,WWG2020,WWG2020B,YyLy2015caot,ZfLy2018caot}.

Recently, in \cite{DjLsLz2024mmas}, the authors utilized Khinchin's inequality, Ger$\check{\mbox{s}}$gorin's theorem, and the atomic decomposition of weighted Bergman spaces to eliminate the restriction (\ref{0914-1}). They demonstrated that for $1<p,q<\infty$ and $\om,\up\in \dD$, the following holds:
$$\|T_{u_0,u_1,\vp}\|_{A_\om^p\to A_\up^q} \approx \|u_0D_\vp^{(0)}\|_{A_\om^p\to A_\up^q} + \|u_1 D_\vp^{(1)}\|_{A_\om^p\to A_\up^q}$$
and
$$\|T_{u_0,u_1,\vp}\|_{e,A_\om^p\to A_\up^q} \approx \|u_0D_\vp^{(0)}\|_{e,A_\om^p\to A_\up^q} + \|u_1 D_\vp^{(1)}\|_{e,A_\om^p\to A_\up^q}.$$
Here, $\|T_{u_0,u_1,\vp}\|_{e,A_\om^p\to A_\up^q}$ represents the essential norm of the operator $T_{u_0,u_1,\vp}: A_\om^p \to A_\up^q$, which quantifies the distance between $T_{u_0,u_1,\vp}$ and the compact operators acting from $A_\om^p$ to $A_\up^q$.
In the general scenario, the essential norm of a bounded operator \( T \) mapping from a Banach space \( X \) to another Banach space \( Y \) is defined as follows:
$$\|T\|_{e,X\to Y} = \inf\left\{\|T - K\|_{X\to Y} : K:X\to Y \text{ is compact}\right\}.$$
It is clear that \( T \) is compact if and only if \( \|T\|_{e,X\to Y} = 0 \).

In 2021, Choe et al. \cite{CCKY2020jfa} provided comprehensive characterizations regarding Carleson measures for the boundedness and compactness of differences of weighted composition operators operating on the standard weighted Bergman spaces over the unit disk. This work also led to a characterization of compactness for the differences of unweighted composition operators acting on Hardy spaces, addressing a question posed by Shapiro and Sundberg in \cite{SjSc1990pjm} three decades earlier.
Building upon their work, the results were generalized to various standard weighted Bergman spaces on the unit disk \cite{CCKY2021ieot} and the unit ball \cite{CCKP2024caot}, as well as to weighted Bergman spaces defined by two-sided doubling weights \cite{Cj2023bkms}.

Motivated  by these researches, we prove in this short note that
$$\|u_0C_\vp+u_1D_\psi^{(1)}\|_{A_\om^p\to L_\up^q}\approx \|u_0C_\vp\|_{A_\om^p\to L_\up^q}+\|u_1D_\psi^{(1)}\|_{A_\om^p\to L_\up^q}$$
and
$$\|u_0C_\vp+u_1D_\psi^{(1)}\|_{e,A_\om^p\to L_\up^q}\approx \|u_0C_\vp\|_{e,A_\om^p\to L_\up^q}+\|u_1D_\psi^{(1)}\|_{e,A_\om^p\to L_\up^q}$$
for $1<p,q<\infty$, $u_0,u_1\in H(\D)$, $\vp,\psi\in S(\D)$,  $\om\in\dD$ and  $\up$ being a positive Borel measure on $\D$.

The remainder of this paper is structured as follows: In Section 2, we present the essential preliminaries. Section 3 is devoted to the proof of the main theorem.

In this paper, the symbol \( C \) will denote constants that may change from one instance to another. 
For two positive functions \( f \) and \( g \), we use the notation \( f \lesssim g \) to signify 
that there exists a positive constant \( C \), independent of the variables, such that \( f \leq Cg \). 
Likewise, \( f \approx g \) indicates that both \( f \lesssim g \) and \( g \lesssim f \) hold true.\\
\section{preliminaries}

In this section, we state some lemmas which  will be used in the proof of main result of this paper.

Using the pull-back measure, the operators \( uC_\vp \) and \( uD_\vp^{(n)} \) in Bergman spaces associated with doubling weights were studied in \cite{DjLsSy2020ms,Lb2021bams}. To establish the necessary conditions for \( uC_\vp \) and \( uD_\vp^{(n)} \) to be bounded and compact, the authors employed a family of test functions \( \{f_{\lambda,\gamma}\} \) from \( A_\om^p \).

When $\om\in\hD$ and $0<p<\infty$, let
\begin{align*}
f_{\lambda,\gamma,\om,p}(z)=\left(\frac{1-|\lambda|^2}{1-\ol{\lambda}z}\right)^{\gamma}\frac{1}{\om(S(\lambda))^\frac{1}{p}},\, \,\lambda,z\in\D.
\end{align*}
According to \cite[Lemma 3.1]{Pj2015}, there exists $\gamma_\#=\gamma_\#(\om,p)$, whenever $\gamma>\gamma_\#$,
\begin{align}\label{0302-1}
\|f_{\lambda,\gamma,\omega,p}\|_{A_\omega^p}\approx 1,\,\,\,\,\lambda\in\D.
\end{align}
For the sake of simplicity, we will denote \( f_{\lambda,\gamma,\omega,p} \) as \( f_{\lambda,\gamma} \).
It is clear that if \( \gamma \) is held constant, \( f_{\lambda,\gamma} \) converges uniformly to 0 on any compact subset of \( \D \) as \( |\lambda| \) approaches 1.

When $u\in H(\D),\vp\in S(\D), 0<q<\infty, \up$ is a positive Borel measure on $\D$, for any measurable  set $E\subset\D$, let
$$\mu_{u,\vp,q,\up}(E)=\int_{\vp^{-1}(E)} |u(z)|^q d\up(z).$$
Then,
\begin{align}\label{0624-1}
\|uD_\vp^{(n)} \|_{A_\om^p\to L_\up^q}= \|D^{(n)}\|_{A_\om^p\to L_{\mu_{u,\vp,q,\up}}^q}.
\end{align}
To express \( \|D^{(n)}\|_{A_\omega^p \to L_{\mu_{u,\varphi,q,\up}}^q} \), as provided in \cite{PjRjSk2021jga}, we need to introduce some additional notations.

As usual, we define \(\rho(a,z) = |\varphi_a(z)| = \left|\frac{a-z}{1-\overline{a}z}\right|\) to represent the pseudohyperbolic distance between \(z\) and \(a\), and let \(E_s(a) = \{z \in \mathbb{D} : \rho(a,z) < s\}\) denote the pseudohyperbolic disc centered at \(a \in \mathbb{D}\) with radius \(s \in (0,1)\). A sequence \(\{a_i\}_{i=1}^\infty\) is referred to as a \(\delta\)-lattice if \(D = \bigcup_{i=1}^\infty E_\delta(a_i)\) and the sets \(E_{\delta/2}(a_i)\) are pairwise disjoint. Thus, Theorem 3 in \cite{PjRjSk2021jga} (see also \cite[Theorem 1.2]{Lb2021bams}) can be stated as follows.
\begin{otherth}\label{thB}
Let $0 < p,q<\infty, n\in\N\cup\{0\}, \om\in\dD, 0 < s < 1, \mu$ be a positive Borel
measure on $\D$. Then the following
statements hold.
\begin{enumerate}[(i)]
  \item When $0<p\leq q<\infty$, $D^{(n)}:A_\om^p\to L_\mu^q$ is bounded  if and only if $$\sup_{z\in\D}\frac{\mu(E_s(z))}{(1-|z|)^{nq}\om(S(z))^\frac{q}{p}}<\infty.$$
      Moreover,
           $$\|D^{(n)}\|_{A_\om^p\to L_\mu^q}^q\approx \sup_{z\in\D}\frac{\mu(E_s(z))}{(1-|z|)^{nq}\om(S(z))^\frac{q}{p}}.$$
  \item When $0<q<p<\infty$, the following statements are equivalent:
         \begin{enumerate}
         \item[(iia)]   $D^{(n)}:A_\om^p\to L_\mu^q$ is bounded;
         \item[(iib)]  $D^{(n)}:A_\om^p\to L_\mu^q$ is compact;
         \item[(iic)] the maximum function
           $$M_\om(\mu,n)(z):=\frac{\mu(E_s(z))}{(1-|z|)^{nq}\om(S(z))}, \,\,\,z\in\D$$
       belongs to $L_\om^\frac{p}{p-q}$.
         \end{enumerate}
       Moreover,
        $$\|D^{(n)}\|_{A_\om^p\to L_\mu^q}^q\approx \|M_\om(\mu,n)\|_{L_\om^\frac{p}{p-q}}.$$
\end{enumerate}
\end{otherth}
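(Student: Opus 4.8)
The plan is to recast $D^{(n)}$ as a Carleson-type embedding and reduce everything to a $\d$-lattice $\{a_i\}$. The engine will be the localized pointwise estimate
$$|f^{(n)}(z)|^p\lesssim \frac{1}{(1-|z|)^{np}\,\om(S(z))}\int_{E_s(z)}|f|^p\,\om\,dA,\qquad f\in A_\om^p,$$
which is the standard point-evaluation inequality for doubling weights (subharmonicity together with the distribution estimates for $\om\in\dD$), combined with the Cauchy estimate that shifts $n$ derivatives at the cost of $(1-|z|)^{-n}$. Since $\om\in\dD$ gives $\om(S(w))\approx\om(S(z))$ and $1-|w|\approx 1-|z|$ for $w\in E_s(z)$, and since the enlarged cells $E_{2\d}(a_i)$ have finite overlap, this yields the lattice norm estimate $\sum_i\om(S(a_i))\,\big(\sup_{E_\d(a_i)}|f|\big)^p\lesssim\|f\|_{A_\om^p}^p$, which I will use throughout.

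For $0<p\le q$, sufficiency follows by writing $\int_\D|f^{(n)}|^q\,d\mu\le\sum_i\mu(E_\d(a_i))\,\big(\sup_{E_\d(a_i)}|f^{(n)}|\big)^q$, inserting the pointwise estimate, factoring out $S:=\sup_z\mu(E_s(z))\big/\big((1-|z|)^{nq}\om(S(z))^{q/p}\big)$, and finishing with $\ell^p\hookrightarrow\ell^q$ (valid since $q/p\ge1$) together with the lattice norm estimate; this gives $\|D^{(n)}\|^q\lesssim S$. For necessity I would test against $f_{\lambda,\gamma}$: since $\|f_{\lambda,\gamma}\|_{A_\om^p}\approx1$ by \eqref{0302-1} and a direct differentiation shows $|f_{\lambda,\gamma}^{(n)}(z)|\gtrsim(1-|\lambda|)^{-n}\om(S(\lambda))^{-1/p}$ on $E_s(\lambda)$ for $\gamma$ large, boundedness forces $\mu(E_s(\lambda))\big/\big((1-|\lambda|)^{nq}\om(S(\lambda))^{q/p}\big)\lesssim\|D^{(n)}\|^q$; taking the supremum in $\lambda$ yields both the criterion and the matching lower bound $S\lesssim\|D^{(n)}\|^q$.

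For $0<q<p$, the implication (iic)$\Rightarrow$(iia) runs through the same decomposition, except that after writing $\mu(E_\d(a_i))\big/(1-|a_i|)^{nq}\approx M_\om(\mu,n)(a_i)\,\om(S(a_i))$ I would split $\om(S(a_i))=\om(S(a_i))^{(p-q)/p}\,\om(S(a_i))^{q/p}$ and apply H\"older with exponents $p/(p-q)$ and $p/q$; the first factor reassembles (using that $M_\om(\mu,n)$ is essentially constant on each cell) into $\|M_\om(\mu,n)\|_{L_\om^{p/(p-q)}}$ and the second into $\|f\|_{A_\om^p}^q$, so $\|D^{(n)}\|^q\lesssim\|M_\om(\mu,n)\|_{L_\om^{p/(p-q)}}$. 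The implication (iic)$\Rightarrow$(iib) is then automatic: membership of $M_\om(\mu,n)$ in $L_\om^{p/(p-q)}$ makes the tail of that sum small, so a bounded sequence tending to $0$ uniformly on compacta is mapped to $0$ in $L_\mu^q$; and (iib)$\Rightarrow$(iia) is trivial.

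The main obstacle is the necessity (iia)$\Rightarrow$(iic), where no single extremal function is available and I would randomize. For finitely supported $(c_i)$ I set $F_t=\sum_i c_i\,r_i(t)\,f_{a_i,\gamma}$ with $r_i$ the Rademacher functions; Khinchin's inequality applied to both $|F_t|^p$ and $|F_t^{(n)}|^q$, the almost-orthogonality bound $\int_0^1\|F_t\|_{A_\om^p}^p\,dt\lesssim\sum_i c_i^p$, and integration of the boundedness inequality over $t\in(0,1)$ would give
$$\sum_i c_i^q\,M_\om(\mu,n)(a_i)\,\om(S(a_i))^{(p-q)/p}\lesssim\Big(\sum_i c_i^p\Big)^{q/p}.$$
Since $p/q>1$, duality between $\ell^{p/q}$ and $\ell^{p/(p-q)}$ converts this into $\sum_i M_\om(\mu,n)(a_i)^{p/(p-q)}\om(S(a_i))<\infty$, that is $M_\om(\mu,n)\in L_\om^{p/(p-q)}$, with the sharp bound. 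The points I expect to cost the most effort are the Rademacher almost-orthogonality estimate and the faithful transfer of $M_\om(\mu,n)$ and $\om(S(\cdot))$ between the lattice sum and the continuous $L_\om^{p/(p-q)}$-integral; both rely essentially on the two-sided doubling hypothesis $\om\in\dD$.
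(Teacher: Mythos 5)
The paper does not actually prove Theorem B: it is quoted verbatim from Theorem 3 of \cite{PjRjSk2021jga} (see also \cite[Theorem 1.2]{Lb2021bams}), so there is no internal proof to compare against. Your sketch is a correct reconstruction of the standard argument behind that cited result: the lattice/subharmonicity embedding for sufficiency, the test functions $f_{\lambda,\gamma}$ normalized by \eqref{0302-1} for necessity when $p\leq q$, and Rademacher--Khinchin randomization combined with $\ell^{p/q}$--$\ell^{p/(p-q)}$ duality for necessity when $q<p$ --- which is precisely the machinery this paper itself deploys in Part (c) of the proof of Theorem \ref{0421-1}, so nothing in your route is foreign to the paper's toolkit. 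Two details to nail down if you write it in full: first, your pointwise lower bound $|f^{(n)}_{\lambda,\gamma}(z)|\gtrsim(1-|\lambda|)^{-n}\om(S(\lambda))^{-1/p}$ on $E_s(\lambda)$ carries a hidden factor $|\lambda|^n$ from differentiating $(1-\ol{\lambda}z)^{-\gamma}$, so it degenerates as $\lambda\to 0$; you must restrict to $|\lambda|>\tau$ and dispose of the compact part using local finiteness of $\mu$, exactly the splitting the paper performs around \eqref{0625-1}. Second, your ``faithful transfer'' between the lattice sum and the continuous integral of $M_\om(\mu,n)$ silently changes the pseudohyperbolic radius (from $\d/2$ to $s$ and back); this needs the finite-covering lemma for pseudohyperbolic disks together with $\om(E_s(z))\approx\om(S(z))$, and the latter is where the two-sided hypothesis $\om\in\dD$ (rather than just $\om\in\hD$) enters irreplaceably, since an upper doubling weight may vanish on annuli. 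With those two standard repairs your outline is sound and matches the proof in the cited source.
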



 When $\om\in\hD$ and $1<p<\infty$, by \cite[Theorem 7]{PjRj2021adv},  $(A_\om^p)^*\simeq A_\om^\frac{p}{p-1}$.
 By using \cite[Lemma 2.1]{CT2016jmaa}, we have the following lemma.

\begin{Lemma}\label{0406-2} Suppose $1<p,q<\infty$,  $\om\in\hD$, $\mu$ is a positive Borel measure on $\D$.  If $K:A_\om^p\to L_\mu^q$ is bounded,  $K$ is compact if and only if $\|K{f_n}\|_{L_\mu^q}\to 0$ as $n\to \infty$ whenever $\{f_n\}$ is bounded in $A_\om^p$ and uniformly converges to 0 on any compact subset of $\D$ as $n\to \infty$.
\end{Lemma}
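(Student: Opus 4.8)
The plan is to deduce the statement from the abstract compactness criterion \cite[Lemma 2.1]{CT2016jmaa}, after verifying that the Bergman space $A_\om^p$ satisfies the two structural hypotheses that such a criterion requires: that $A_\om^p$ is reflexive and that its point evaluations are continuous. Reflexivity is immediate once $\om\in\hD$ and $1<p<\infty$, because \cite[Theorem 7]{PjRj2021adv} gives $(A_\om^p)^*\simeq A_\om^{p/(p-1)}$ and, applying the same identification again with the conjugate exponent, $(A_\om^{p/(p-1)})^*\simeq A_\om^p$; hence $A_\om^p$ is reflexive. Continuity of point evaluations follows from the standard growth estimate $|f(z)|\lesssim \|f\|_{A_\om^p}\,\om(S(z))^{-1/p}$ valid for $\om\in\hD$, which shows both that each $\lambda\mapsto f(\lambda)$ is a bounded functional and that closed balls of $A_\om^p$ are locally uniformly bounded.

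For the forward implication, suppose $K$ is compact and let $\{f_n\}$ be bounded in $A_\om^p$ with $f_n\to0$ uniformly on compact subsets of $\D$. The first step is to upgrade this to weak convergence $f_n\rightharpoonup0$ in $A_\om^p$. Because $A_\om^p$ is reflexive, any subsequence of $\{f_n\}$ has a further weakly convergent subsequence $f_{n_k}\rightharpoonup h$; testing against the point-evaluation functionals forces $h(z)=\lim_k f_{n_k}(z)=0$ for every $z\in\D$, so $h\equiv0$. Since every weakly convergent subsequence has limit $0$, the whole sequence satisfies $f_n\rightharpoonup0$. The second step invokes the general fact that a compact operator maps weakly null sequences to norm null sequences, yielding $\|Kf_n\|_{L_\mu^q}\to0$.

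For the reverse implication, assume the sequential condition and let $\{g_n\}$ be an arbitrary sequence in the closed unit ball of $A_\om^p$. By the growth estimate above $\{g_n\}$ is a normal family, so Montel's theorem provides a subsequence $g_{n_k}$ converging uniformly on compact subsets to some $g\in H(\D)$; Fatou's lemma together with the same pointwise bound shows $g\in A_\om^p$ with $\|g\|_{A_\om^p}\le1$. Then $f_k:=g_{n_k}-g$ is bounded in $A_\om^p$ and converges to $0$ uniformly on compacta, so the hypothesis gives $\|Kf_k\|_{L_\mu^q}\to0$, that is, $Kg_{n_k}\to Kg$ in $L_\mu^q$. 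Hence $K$ carries bounded sequences to sequences with norm-convergent subsequences, which is exactly compactness. The main obstacle is the first step of the forward direction, namely promoting uniform convergence on compacta to genuine weak convergence, and it is precisely here that the reflexivity of $A_\om^p$ and the identification of its dual via \cite[Theorem 7]{PjRj2021adv} are essential; without reflexivity one could neither extract weakly convergent subsequences nor pin down their limit from the pointwise data.
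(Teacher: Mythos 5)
Your proof is correct, and it supplies details the paper leaves implicit: the paper's entire proof of this lemma consists of invoking \cite[Lemma 2.1]{CT2016jmaa} together with the duality $(A_\om^p)^*\simeq A_\om^{p/(p-1)}$ from \cite[Theorem 7]{PjRj2021adv}, which it records immediately before the statement. Your reverse implication (Montel plus Fatou, applied to $g_{n_k}-g$) is exactly the mechanism behind that cited criterion. Where you genuinely diverge is the forward implication: the classical Tjani/Colonna--Tjani argument assumes the \emph{target} is a Banach space of analytic functions with continuous point evaluations, so that a norm-limit point of $\{Kf_n\}$ can be identified pointwise as $0$; that argument does not transfer verbatim to the target $L_\mu^q$, where point evaluation is meaningless. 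Your substitute --- upgrading uniform convergence on compacta to weak nullity via reflexivity and the subsequence principle, then using that compact operators take weakly null sequences to norm-null ones --- is precisely the right repair, and it explains why the paper cites the duality theorem at this point at all. One small caution: your reflexivity argument (``apply the dual identification twice'') as literally stated only shows that $A_\om^p$ is isomorphic to its bidual, which is strictly weaker than reflexivity (James's space is isometrically isomorphic to its bidual without being reflexive); you should either note that both identifications in \cite{PjRj2021adv} are implemented by the natural integral pairing, so that their composition is the canonical embedding, or --- simpler --- observe that $A_\om^p$ is a closed subspace of the reflexive space $L^p(\om\,dA)$ and is therefore reflexive. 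With that patch the argument is complete.
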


Using this lemma, the essential norm of $D^{(n)}:A_\om^p\to L_\mu^q$ can be estimated in a standard way.
For the benefit of readers, we display the proof of it.
\begin{Theorem}\label{0624-2}
Let $1< p\leq q<\infty, n\in\N\cup\{0\}, \om\in\dD, 0 < s < 1, \mu$ be a positive Borel
measure on $\D$.    If $D^{(n)}:A_\om^p\to L_\mu^q$ is bounded,
$$\|D^{(n)}\|_{e,A_\om^p\to L_\mu^q}^q\approx \limsup_{|z|\to 1}\frac{\mu(E_s(z))}{(1-|z|)^{nq}\om(S(z))^\frac{q}{p}}.$$
\end{Theorem}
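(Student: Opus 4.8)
The plan is to prove the claimed comparison by establishing matching lower and upper bounds, writing throughout $A:=\limsup_{|z|\to1}\frac{\mu(E_s(z))}{(1-|z|)^{nq}\om(S(z))^{q/p}}$ for brevity. For the lower bound I would exploit the test functions $f_{\lmd,\gamma}$ with a fixed $\gamma>\gamma_\#$, which by \eqref{0302-1} satisfy $\|f_{\lmd,\gamma}\|_{A_\om^p}\approx1$ and tend to $0$ uniformly on compact subsets of $\D$ as $|\lmd|\to1$. By Lemma~\ref{0406-2}, any compact operator $K:A_\om^p\to L_\mu^q$ maps this family to norm zero, i.e. $\|Kf_{\lmd,\gamma}\|_{L_\mu^q}\to0$ as $|\lmd|\to1$. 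Since $\|f_{\lmd,\gamma}\|_{A_\om^p}\lesssim1$ gives $\|D^{(n)}-K\|_{A_\om^p\to L_\mu^q}\gtrsim\|(D^{(n)}-K)f_{\lmd,\gamma}\|_{L_\mu^q}$ for every $\lmd$, the triangle inequality together with taking the infimum over $K$ yields $\|D^{(n)}\|_{e,A_\om^p\to L_\mu^q}\gtrsim\limsup_{|\lmd|\to1}\|D^{(n)}f_{\lmd,\gamma}\|_{L_\mu^q}$.

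It then remains to bound $\|D^{(n)}f_{\lmd,\gamma}\|_{L_\mu^q}$ from below. A direct computation gives $f_{\lmd,\gamma}^{(n)}(z)=c_{n,\gamma}\,\ol{\lmd}^{\,n}\,(1-|\lmd|^2)^{\gamma}(1-\ol{\lmd}z)^{-\gamma-n}\om(S(\lmd))^{-1/p}$ for a nonzero constant $c_{n,\gamma}$, and since $|1-\ol{\lmd}z|\approx1-|\lmd|$ on $E_s(\lmd)$ (and $|\lmd|\approx1$ for $|\lmd|$ near the boundary), one obtains $|f_{\lmd,\gamma}^{(n)}(z)|\approx(1-|\lmd|)^{-n}\om(S(\lmd))^{-1/p}$ uniformly for $z\in E_s(\lmd)$. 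Restricting the integral defining the norm to $E_s(\lmd)$ then gives $\|D^{(n)}f_{\lmd,\gamma}\|_{L_\mu^q}^q\gtrsim\frac{\mu(E_s(\lmd))}{(1-|\lmd|)^{nq}\om(S(\lmd))^{q/p}}$, and passing to the $\limsup$ as $|\lmd|\to1$ produces $\|D^{(n)}\|_{e,A_\om^p\to L_\mu^q}^q\gtrsim A$.

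For the upper bound I would approximate $D^{(n)}$ by compact operators obtained by truncating $\mu$. For $0<r<1$ set $\D_r=\{|z|\le r\}$ and split $\mu=\mu\chi_{\D_r}+\mu\chi_{\D\setminus\D_r}$. If $\{f_k\}$ is bounded in $A_\om^p$ and converges to $0$ uniformly on compacta, then Cauchy's estimates force $f_k^{(n)}\to0$ uniformly on the compact set $\D_r$, whence $\|D^{(n)}f_k\|_{L^q_{\mu\chi_{\D_r}}}\to0$; by Lemma~\ref{0406-2} this makes $D^{(n)}:A_\om^p\to L^q_{\mu\chi_{\D_r}}$ compact. Consequently $\|D^{(n)}\|_{e,A_\om^p\to L_\mu^q}^q\le\|D^{(n)}\|_{A_\om^p\to L^q_{\mu\chi_{\D\setminus\D_r}}}^q$, which by Theorem~\ref{thB}(i) is comparable to $\sup_{z\in\D}\frac{\mu(E_s(z)\cap(\D\setminus\D_r))}{(1-|z|)^{nq}\om(S(z))^{q/p}}$.

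The decisive step is the passage $r\to1$ in this supremum. Given $\e>0$, the definition of $A$ provides $\rho<1$ with $\frac{\mu(E_s(z))}{(1-|z|)^{nq}\om(S(z))^{q/p}}<A+\e$ whenever $|z|>\rho$; for such $z$ the truncated quotient is a fortiori $<A+\e$. For $|z|\le\rho$ the discs $E_s(z)$ all lie in a fixed compact subset $\{|w|\le\rho'\}$ of $\D$, so choosing $r\ge\rho'$ makes $E_s(z)\cap(\D\setminus\D_r)=\emptyset$ and annihilates those terms. Hence the supremum is $\le A+\e$ for $r$ close to $1$, giving $\|D^{(n)}\|_{e,A_\om^p\to L_\mu^q}^q\lesssim A$ and completing the argument. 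The main obstacle I anticipate lies exactly here—controlling the transition region $|z|\le\rho$ so that the truncated supremum converges to the boundary $\limsup$—together with verifying the uniform on-diagonal estimate $|f_{\lmd,\gamma}^{(n)}|\approx(1-|\lmd|)^{-n}\om(S(\lmd))^{-1/p}$ on $E_s(\lmd)$.
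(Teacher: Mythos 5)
Your proposal is correct and follows essentially the same route as the paper: the upper bound via the compact truncations $\chi_r D^{(n)}$ (compactness from Lemma~\ref{0406-2}, norm of the remainder from Theorem~\ref{thB}(i), then $r\to1$), and the lower bound by testing $D^{(n)}-K$ on the family $f_{\lmd,\gamma}$, restricting to $E_s(\lmd)$, and killing $\|Kf_{\lmd,\gamma}\|_{L_\mu^q}$ with Lemma~\ref{0406-2}. The only difference is that you spell out two steps the paper leaves implicit — the on-diagonal estimate $|f_{\lmd,\gamma}^{(n)}|\approx(1-|\lmd|)^{-n}\om(S(\lmd))^{-1/p}$ on $E_s(\lmd)$ and the careful $\e$-argument showing the truncated supremum converges to the $\limsup$ — both of which are handled correctly.
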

\begin{proof}
For any $r\in (0,1)$, let $\chi_r$ be the characteristic function of $r\D$. That is $\chi_r(z)=1$ when $|z|<r$ and $\chi_r(z)=0$ when $|z|\geq r$. By Lemma \ref{0406-2} and Theorem \ref{thB}, $\chi_r D^{(n)}:A_\om^p\to L_\mu^q$ is compact and
$$\|D^{(n)}\|_{e,A_\om^p\to L_\mu^q}^q\leq \|D^{(n)}-\chi_r D^{(n)}\|_{A_\om^p\to L_\mu^q}\approx \sup_{z\in\D}\frac{(\mu-\mu\chi_r)(E_s(z))}{(1-|z|)^{nq}\om(S(z))^\frac{q}{p}}.$$
Letting $r\to 1$, we have
$$\|D^{(n)}\|_{e,A_\om^p\to L_\mu^q}^q\lesssim \limsup_{|z|\to 1}\frac{\mu(E_s(z))}{(1-|z|)^{nq}\om(S(z))^\frac{q}{p}}.$$

On the other hand, for any compact operator $K:A_\om^p\to L_\mu^q$, for a given $\gamma>\gamma_\#$, when $\lmd\in\D$, we have
\begin{align*}
\|(D^{(n)}-K)f_{\lmd,\gamma}\|_{L_\mu^q}^q
&\gtrsim \int_{\D} |D^{(n)}f_{\lambda,\gamma}(z)|^qd\mu(z)-\int_\D |Kf_{\lambda,\gamma}(z)|^qd\mu(z)\\
&\geq \int_{E_s(\lambda)} |D^{(n)}f_{\lambda,\gamma}(z)|^qd\mu(z)-\int_\D |Kf_{\lambda,\gamma}(z)|^qd\mu(z).
\end{align*}
Letting $|\lambda|\to 1$, by Lemma \ref{0406-2},  we have
$$\|D^{(n)}-K\|_{A_\om^p\to L_\mu^q}^q\gtrsim \limsup_{|\lmd|\to 1}\frac{\mu(E_s(\lmd))}{(1-|\lmd|)^{qn}\om(S(\lmd))^\frac{q}{p}}.$$
Since $K$ is arbitrary, we get the desired lower estimate. The proof is complete.
\end{proof}

\begin{Remark}
By (\ref{0624-1}), Theorem \ref{thB} and Theorem \ref{0624-2}, when $1<p,q<\infty$, $n\in\N\cup\{0\}$, $\om\in\dD$ and $\up$ is a positive Borel measure on $\D$,   the norm and essential norm of $uD_\vp^{(n)}:A_\om^p\to L_\up^q$ can be estimated by using pull-back measure $\mu_{u,\vp,q,\up}$.
\end{Remark}

\section{The main result and proof}
In \cite{CCKY2020jfa}, the authors characterized the boundedness and compactness of the operator \( uC_\varphi - vC_\psi : A_\alpha^p \to A_\alpha^q \). To derive the necessary conditions, they decomposed \( \varphi^{-1}(E_s(a)) \) into six parts for any \( a \in \mathbb{D} \) in order to obtain a lower estimate for \( |u - vQ_b^\gamma| \). Here and throughout, we define
\begin{align}\label{0725-1}
Q_b = \frac{1 - \overline{b}\varphi}{1 - \overline{b}\psi}.
\end{align}
The process is quite complicated. For the convenience of the readers, all the symbols used are the same as those in \cite{CCKY2020jfa}. We will summarize some necessary estimates as follows.

For any $a\in \D\backslash\{0\}$, $\varepsilon, N>0$, $s\in(0,1)$, let  
$$a_N:=ae^{-N(1-|a|)\mathrm{i}},\qquad\Gamma_N(a):=\big\{a\zeta:|\zeta|=1 \mbox{ and  } |\mbox{Arg} \zeta|\leq N(1-|a|)\big\},$$
$$\Omega_{\varepsilon,s}(a):=\left\{w\in\D:\frac{|1-\ol{a}w|}{(1+\varepsilon)^2}<\sup_{z\in E_s(a) }|1-\ol{a}z|\right\},\qquad
\mathrm{C}_{a}^+:=\{w\in\CC:\mathrm{Im}(\ol{a}w)>0\}.$$
Then, fix $s\in (0,1)$ and $\gamma>\max\{1,\gamma_\#\}$. Choose $N>0$ such that
$$\mbox{Arg}\left(1+\frac{8\mathrm{i}}{N(1-s)}\right)<\frac{\pi}{12\gamma}.$$
So, there exists  $\delta=\delta(N)\in(0,1)$ such that $\rho(a,a_N)<\delta$. By Lemma 4.30 in\cite{Zhu1}, for any   $z\in\D$,
\begin{align}\label{0619-1}
|1-\ol{a_N}z|\approx |1-\ol{a}z|.
\end{align}
By \cite[(4.9)]{CCKY2020jfa}, we can choose $\varepsilon=\varepsilon(s,N,\gamma)\in(0,1]$ such that
\begin{align}\label{4.9}
\sup_{z\in E_s(a) \atop{w\in \Omega_{1,s}(a)\cap \mathrm{C}_{a}^+}}
\left|\frac{1-\ol{a_N}z}{1-\ol{a_N}w}\right|^{\gamma}\leq \frac{1}{\sqrt{2}((1+\varepsilon)^2-1)}
\end{align}
as  $|a|\to 1$.
 Then, put
 \begin{align*}
 F_1(a)&:=\vp^{-1}( E_s(a) )\cap \psi^{-1}( \Omega_{\varepsilon,s}^+(a)),\,\,\,\,\\
 F_2(a)&:=\vp^{-1}( E_s(a) )\cap \psi^{-1}( \Omega_{\varepsilon,s}^-(a)),\,\,\,\,\\
 F_3(a)&:=\vp^{-1}( E_s(a) )\cap \psi^{-1}( \Omega_{\varepsilon,s}^\p(a)),
 \end{align*}
 in which,
  \begin{align*}
 \Omega_{\varepsilon,s}^+(a):= \Omega_{\varepsilon,s}(a)\cap \mathrm{C}_a^+,\quad
 \Omega_{\varepsilon,s}^-(a):= \Omega_{\varepsilon,s}(a)\backslash  \Omega_{\varepsilon,s}^+(a),\quad
  \Omega_{\varepsilon,s}^\p(a):= \D\backslash\Omega_{\varepsilon,s}(a).
 \end{align*}
 Therefore,
 $$\D= \Omega_{\varepsilon,s}^+(a)\cup  \Omega_{\varepsilon,s}^-(a)\cup   \Omega_{\varepsilon,s}^\p(a),\,\,\,\,\,
 \vp^{-1}( E_s(a) )=F_1(a)\cup F_2(a)\cup F_3(a).$$
To accomplish our goals, we can give a more assumption
\begin{align}\label{0622-3}
u=1,\quad v=2.
\end{align}
Keep this in mind, because we will use it several times.

On $F_1(a)$, noting that
\begin{align}\nonumber
u-vQ_{a_N}^\gamma=u(1-Q_{a_N}^\gamma)+(u-v)Q_{a_N}^\gamma,
\end{align}
we further decompose the region $F_1(a)$ into two parts, namely,
$$F_{1,1}(a):=\left\{z\in F_1(a):|u(z)||1-Q_{a_N}^\gamma(z)|\leq \frac{1}{1+\varepsilon}|u(z)-v(z)||Q_{a_N}^\gamma(z)|\right\}$$
and  $$F_{1,2}(a):=F_1(a)\backslash F_{1,1}(a).$$
By Lemma 3.3 in \cite{CCKY2020jfa}, we have
\begin{align*}
|Q_{a_N}|\geq\frac{1-|a|}{|1-\ol{a_N}\psi|}\gtrsim  1 \mbox{ on }\psi^{-1}(\Omega_{1,\varepsilon}(a)).
\end{align*}
Since $F_1(a))\subset \psi^{-1}(\Omega_{1,\varepsilon}(a))$,
\begin{align}\label{0622-4}
|Q_{a_N}|\gtrsim  1 \mbox{ on } F_1(a)).
\end{align}
Then, (4.12) and (4.13) in \cite{CCKY2020jfa} demonstrate that
\begin{align}\label{0622-1}
\left|u-vQ_{a_N}^\gamma\right|^q
\approx |u|^q\left|1-Q_{a_N}^\gamma\right|^q+|u-v|^q|Q_{a_N}^\gamma|^{q\gamma}
\gtrsim 1
\,\,\mbox{ on }\,\,F_{1,1}(a)
\end{align}
and
\begin{align}\label{0622-2}
\left|u-vQ_{a_N}^{2\gamma}\right|
\gtrsim |u|^q\left|1-Q_{a_N}^\gamma\right|^q+|u-v|^q|Q_{a_N}^\gamma|^{q\gamma}
\gtrsim 1
\,\,\mbox{ on }\,\,F_{1,2}(a).
\end{align}
Here, the last \(\gtrsim\) in equations (\ref{0622-1}) and (\ref{0622-2}) are derived from equations (\ref{0622-3}) and (\ref{0622-4}).

Then, on $F_3(a)$, we decompose the region $F_3(a)$ into two parts as in the case of $F_1(a)$. Set
$$
F_{3,1}(a):=\left\{z\in F_3(a):|u(z)|\leq \frac{1}{1+\varepsilon}|v(z)||Q_a^\gamma(z)|\right\}
$$
and
$$
F_{3,2}(a):=F_3(a)\backslash F_{3,1}(a).
$$
Then, using the assumption (\ref{0622-3}), and applying Lemma 4.3 and equation (4.8) in \cite{CCKY2020jfa}, we obtain
\begin{align}\label{0622-5}
|u - vQ_a^\gamma|^q \approx |u|^q + |vQ_a^\gamma|^q \gtrsim 1 \quad \text{on } F_{3,1}(a);
\end{align}
and from equations (4.19), (4.20), and Lemma 4.3 in \cite{CCKY2020jfa}, we find
\begin{align}\label{0622-6}
|u - vQ_a^{\gamma+1}|^q \approx |u|^p = 1 \quad \text{on } F_{3,2}(a).
\end{align}
From the definition of the set \(\Omega_{\varepsilon,s}\), we have
\begin{align}\label{0622-7}
|Q_a| \leq \frac{1}{(1 + \varepsilon)^2} \quad \text{on } F_3(a).
\end{align}
With these estimates established, we can achieve our goal.

\begin{Theorem}\label{0421-1}
Suppose $1<p, q<\infty,\om\in\dD$, $\up$ is a positive Borel measure on $\D$, $\vp,\psi\in S(\D)$ and  $u_0,u_1\in H(\D)$. Then,
$$\|u_0C_{\vp}+u_1D^{(1)}_{\psi}\|_{A_\om^p\to L_\up^q}\approx \|u_0C_{\vp}\|_{A_\om^p\to L_\up^q}+\|u_1D^{(1)}_{\psi}\|_{A_\om^p\to L_\up^q}.$$
Moreover, if $1<p\leq q<\infty$ and $u_0C_{\vp}+u_1D^{(1)}_{\psi}:A_\om^p\to L_\up^q$ is bounded,
$$\|u_0C_{\vp}+u_1D^{(1)}_{\psi}\|_{e,A_\om^p\to L_\up^q}\approx \|u_0C_{\vp}\|_{e,A_\om^p\to L_\up^q}+\|u_1D^{(1)}_{\psi}\|_{e,A_\om^p\to L_\up^q}.$$
\end{Theorem}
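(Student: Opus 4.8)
The inequality $\|u_0C_\vp+u_1D^{(1)}_\psi\|\le\|u_0C_\vp\|+\|u_1D^{(1)}_\psi\|$ is the triangle inequality (and, for the essential norm, subadditivity of $\|\cdot\|_e$). Because $u_1D^{(1)}_\psi=(u_0C_\vp+u_1D^{(1)}_\psi)-u_0C_\vp$, the whole theorem reduces to the single lower bound $\|u_0C_\vp\|\lesssim\|u_0C_\vp+u_1D^{(1)}_\psi\|$ (with the corresponding essential-norm inequality), after which the bound for $u_1D^{(1)}_\psi$ follows by one more triangle/subadditivity step. By (\ref{0624-1}) the summand $u_0C_\vp$ is modelled by the pull-back measure $\mu_0:=\mu_{u_0,\vp,q,\up}$, so by Theorem \ref{thB} it suffices to dominate $\sup_{a}\mu_0(E_s(a))/\om(S(a))^{q/p}$ (when $p\le q$) or $\|M_\om(\mu_0,0)\|_{L_\om^{p/(p-q)}}$ (when $q<p$) by a power of $\|u_0C_\vp+u_1D^{(1)}_\psi\|$.

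The engine is to test the sum against $f_{a,\beta}$ and its rotated analogues. A direct differentiation gives, with $Q_a$ as in (\ref{0725-1}),
$$(u_0C_\vp+u_1D^{(1)}_\psi)f_{a,\beta}(z)=\frac{(1-|a|^2)^\beta}{\om(S(a))^{1/p}(1-\ol{a}\vp(z))^\beta}\left(u_0(z)+\frac{\beta\ol{a}\,u_1(z)}{1-\ol{a}\psi(z)}\,Q_a(z)^\beta\right),$$
together with the same identity for the centre $a_N$. On $\vp^{-1}(E_s(a))$ one has $|1-\ol{a}\vp(z)|\approx1-|a|$, and $|1-\ol{a_N}\vp(z)|\approx1-|a|$ by (\ref{0619-1}), so the prefactor has modulus $\approx\om(S(a))^{-1/p}$ and the expression is governed by the combined symbol $|u_0-vQ_b^\beta|$ of precisely the form analysed in the preliminaries, with $v=-\beta\ol{a}u_1/(1-\ol{a}\psi)$ and $b\in\{a,a_N\}$. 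I would then run the partition $\vp^{-1}(E_s(a))=F_1(a)\cup F_2(a)\cup F_3(a)$ with its refinements, switching the exponent $\beta$ region by region so that the symbol never cancels: on $F_{1,1}$ and $F_{3,1}$ the exponent $\beta=\gamma$ already gives $|u_0-vQ_b^\gamma|\gtrsim|u_0|$ via (\ref{0622-1}) and (\ref{0622-5}) (on $F_{3,1}$ through the domination $|u_0|\le|vQ_a^\gamma|$); on $F_{1,2}$ one uses $f_{a_N,2\gamma}$ and (\ref{0622-2}); on $F_{3,2}$ one uses $f_{a,\gamma+1}$ and (\ref{0622-6}); and $F_2$ is treated as $F_1$ with $a_N$ replaced by the reflected centre $ae^{N(1-|a|)\mathrm{i}}$. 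Each matching test function $f_F$ obeys $\|f_F\|_{A_\om^p}\approx1$ and $\int_F|(u_0C_\vp+u_1D^{(1)}_\psi)f_F|^q\,d\up\gtrsim\om(S(a))^{-q/p}\int_F|u_0|^q\,d\up$.

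Summing the finitely many pieces recovers $\mu_0(E_s(a))=\int_{\vp^{-1}(E_s(a))}|u_0|^q\,d\up$, giving $\mu_0(E_s(a))\lesssim\|u_0C_\vp+u_1D^{(1)}_\psi\|^q\,\om(S(a))^{q/p}$ for every $a$. When $p\le q$, taking the supremum over $a$ and applying Theorem \ref{thB}(i) finishes the norm estimate; the essential-norm estimate (for $p\le q$) is identical with $\limsup_{|a|\to1}$ in place of $\sup_a$, using Theorem \ref{0624-2} and Lemma \ref{0406-2}: since $f_{a,\beta}\to0$ uniformly on compacta, $\|Kf_{a,\beta}\|_{L_\up^q}\to0$ for every compact $K$, so only the free term contributes to $\limsup_{|a|\to1}\|(u_0C_\vp+u_1D^{(1)}_\psi-K)f_{a,\beta}\|_{L_\up^q}$. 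The \emph{main obstacle} is the case $q<p$: the pointwise-in-$a$ bound is too weak for the maximal-function norm in Theorem \ref{thB}(ii), since integrating it crudely produces the non-integrable density $\om(S(z))^{-1}\om(z)$. To reach $\|M_\om(\mu_0,0)\|_{L_\om^{p/(p-q)}}$ one must test against a lattice superposition $f=\sum_i c_i f_{a_i,\gamma}$ over a $\delta$-lattice, randomise by Rademacher signs and apply Khinchin's inequality to linearise the $q$-th power, and only then run the region-by-region lower bounds; choosing $\{c_i\}$ dual to $M_\om(\mu_0,0)$ yields the required estimate. The conceptual difficulty throughout, and the reason the intricate partition and the three exponents $\gamma,\gamma+1,2\gamma$ cannot be avoided, is the possible cancellation between $u_0C_\vp$ and $u_1D^{(1)}_\psi$ on the test functions, controlled only by the phase rotation $a\mapsto a_N$ that keeps $|1-Q_{a_N}^\gamma|$ bounded below.
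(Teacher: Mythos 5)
Your skeleton matches the paper's (triangle-inequality reduction to the single lower bound for $u_0C_\vp$, the pull-back measure plus Theorem \ref{thB}, the CCKY partition $F_1,F_2,F_3$ with the rotation $a\mapsto a_N$, Lemma \ref{0406-2} and Theorem \ref{0624-2} for the essential norm, and the lattice--Rademacher--Khinchin--duality scheme for $q<p$), but your core engine has a genuine gap. You test with a \emph{single} function $f_{a,\beta}$ per region and claim that $|u_0-vQ_b^\beta|\gtrsim|u_0|$ there, citing (\ref{0622-1}), (\ref{0622-2}), (\ref{0622-5}), (\ref{0622-6}) with $v=-\beta\ol{a}u_1/(1-\ol{a}\psi)$. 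Those estimates are proved in the paper only under the normalization (\ref{0622-3}), i.e.\ for the \emph{constants} $u=1$, $v=2$; they say nothing about your $v$, which depends on $u_1$, on $a$, and on $\beta$. And no such pointwise bound can hold for your $v$: on $F_{1,1}$, if $v(z)$ is close to $u_0(z)$ and $Q_{a_N}^\gamma(z)$ close to $1$ (a configuration compatible with the definition of $F_{1,1}$), then $u_0-vQ_{a_N}^\gamma\approx u_0(1-Q_{a_N}^\gamma)$ is arbitrarily small while $|u_0|$ is large --- this is exactly the cancellation between $u_0C_\vp$ and $u_1D^{(1)}_\psi$ that you correctly identify as the difficulty, but your exponent-switching does not defeat it, because changing $\beta$ from $\gamma$ to $2\gamma$ also changes your $v$ to $2v$, so the region estimates (which fix $v$) do not transfer, and your $u_0$-, $\beta$-dependent regions would not even form one fixed partition over which the pieces can be summed.

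The paper's pivotal move, which your proposal is missing, is to pair \emph{two} test functions with exponents $y$ and $2y$ at the same center $\lambda$: writing $A_y:=u_0+y\ol{\lambda}u_1(1-\ol{\lambda}\vp)^y/(1-\ol{\lambda}\psi)^{y+1}$ for the symbol of $Tf_{\lambda,y}$, one has the exact algebraic identity $A_{2y}-2Q_\lambda^{\,y}A_y=u_0\bigl(1-2Q_\lambda^{\,y}\bigr)$, so the $u_1$-term is eliminated identically; since $|Q_\lambda^{\,y}|\lesssim1$ on the relevant region (by (\ref{4.9}) on $F_1$, by (\ref{0622-7}) on $F_3$), the triangle inequality yields (\ref{0619-2}), a lower bound by $\int_F|u_0|^q|1-2Q_\lambda^{\,y}|^q\,d\up$, and only \emph{then} do the constant-coefficient estimates (\ref{0622-1})--(\ref{0622-6}) with $(u,v)=(1,2)$ apply, on a partition $F_{i,j}(a)$ that is independent of $u_0,u_1$. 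This is also why the paper needs the exponent pairs $(\gamma,2\gamma)$, $(2\gamma,4\gamma)$, $(\gamma,2\gamma)$ again, and $(\gamma+1,2(\gamma+1))$ --- eight test functions per $a$, going up to $4\gamma$ --- rather than your three exponents $\gamma,\gamma+1,2\gamma$. The same pairing must be inserted into your Part (b) argument and under the Khinchin linearization in Part (c) (the paper runs the random sums for both $y$ and $2y$); with it, the rest of your outline goes through as you describe.
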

\begin{proof}
The upper estimates of the norm and essential norm of $u_0C_\vp+u_1 D^{(1)}_\psi$ are obvious.
Put
$$T=u_0C_{\vp}+u_1D^{(1)}_{\psi},\,\quad \|\cdot\|_{A_\om^p\to L_\up^q}=\|\cdot\|,\quad \|\cdot\|_{e,A_\om^p\to L_\up^q}=\|\cdot\|_e $$
for simplicity.
Then we will get the  lower estimates in three parts.
%

{\bf Part (a): $1<p\leq q<\infty$ and the lower estimate of $\|T\|$.}

For any given $a\in\D$, as $|a|\to 1$,  we have
\begin{align}
\|Tf_{\lmd,\gamma}\|_{L_\up^q}^q
&=\frac{1}{\om(S(\lmd))^\frac{q}{p}}\int_\D \left|\frac{u_0(1-|\lmd|^2)^\gamma}{(1-\ol{\lmd}\vp)^\gamma}+
\frac{\gamma\ol{\lmd}u_1(1-|\lmd|^2)^\gamma}{(1-\ol{\lmd}\psi)^{\gamma+1}}\right|^q d\up \nonumber\\
&\geq \frac{(1-|\lambda|^2)^{q\gamma}}{\om(S(\lmd))^\frac{q}{p}}\int_{\vp^{-1}( E_s(a) )} \left|u_0+
\frac{\gamma\ol{\lmd}u_1 (1-\ol{\lmd}\vp)^\gamma}{(1-\ol{\lmd}\psi)^{\gamma+1}}\right|^q\frac{1}{|1-\ol{\lmd}\vp|^{q\gamma}} d\up. \nonumber
\end{align}

On $F_1$, letting $\lambda=a_N$, by (\ref{0619-1}), we get
\begin{align*}
\|Tf_{a_N,\gamma}\|_{L_\up^q}^q
&\gtrsim \frac{1}{\om(S(a_N))^\frac{q}{p}}\int_{F_1(a)} \left|u_0+
\frac{\gamma\ol{a_N}u_1 (1-\ol{a_N}\vp)^\gamma}{(1-\ol{a_N}\psi)^{\gamma+1}}\right|^q d\up
\end{align*}
and
\begin{align*}
\|Tf_{a_N,2\gamma}\|_{L_\up^q}^q
&\gtrsim \frac{1}{\om(S(a_N))^\frac{q}{p}}\int_{F_1(a)} \left|u_0+
\frac{2\gamma\ol{a_N}u_1 (1-\ol{a_N}\vp)^{2\gamma}}{(1-\ol{a_N}\psi)^{2\gamma+1}}\right|^q d\up.
\end{align*}
By (\ref{4.9}), we have
\begin{align*}
\|Tf_{a_N,\gamma}\|_{L_\up^q}^q
&\gtrsim \frac{1}{\om(S(a_N))^\frac{q}{p}}\int_{F_1(a)} \left|\frac{2u_0(1-\ol{a_N}\vp)^\gamma}{(1-\ol{a_N}\psi)^\gamma}+
\frac{2\gamma\ol{a_N}u_1 (1-\ol{a_N}\vp)^{2\gamma}}{(1-\ol{a_N}\psi)^{2\gamma+1}}\right|^q d\up.
\end{align*}
Then, the Triangle inequality implies
\begin{align}
\|Tf_{a_N,\gamma}\|_{L_\up^q}^q+\|Tf_{a_N,2\gamma}\|_{L_\up^q}^q
&\gtrsim \frac{1}{\om(S(a_N))^\frac{q}{p}}\int_{F_1(a)}\left|
u-vQ_{a_N}^\gamma
\right|^q
 |u_0|^q d\up.
\label{0619-2}
\end{align}
Here, $Q_{a_N}$ and $u,v$ are those defined in (\ref{0725-1}) and (\ref{0622-3}).
By (\ref{0622-1}) and (\ref{0622-2}), we obtain
\begin{align*}
\|Tf_{a_N,\gamma}\|_{L_\up^q}^q+\|Tf_{a_N,2\gamma}\|_{L_\up^q}^q
&\gtrsim \frac{1}{\om(S(a_N))^\frac{q}{p}}
\int_{F_{1,1}(a)}  |u_0|^q d\up
\end{align*}
and
\begin{align*}
\|Tf_{a_N,2\gamma}\|_{L_\up^q}^q+\|Tf_{a_N,4\gamma}\|_{L_\up^q}^q
&\gtrsim \frac{1}{\om(S(a_N))^\frac{q}{p}}
\int_{F_{1,2}(a)}  |u_0|^q d\up.
\end{align*}
So, the estimate (\ref{0302-1})   deduces
\begin{align*}
\|T\|^q &\gtrsim \frac{1}{\om(S(a))^\frac{q}{p}}
\int_{F_1(a)} |u_0|^q d\up.
\end{align*}

Next, on $F_2(a)$, letting $\lambda=\ol{a_N}$, by a symmetric argument, we have
\begin{align*}
\|T\|^q
&\gtrsim \frac{1}{\om(S(a))^\frac{q}{p}}
\int_{F_2(a)}
 |u_0|^q d\up.
\end{align*}

Finally, we consider the integral over $F_3(a)$.
Letting $\lambda=a$, as we get (\ref{0619-2}), replacing (\ref{4.9}) with (\ref{0622-7}), we have
\begin{align*}
\|Tf_{a,\gamma}\|_{L_\up^q}^q+\|Tf_{a,2\gamma}\|_{L_\up^q}^q
&\gtrsim \frac{1}{\om(S(a))^\frac{q}{p}}\int_{F_3(a)}\left|
u-vQ_a^\gamma
\right|^q  |u_0|^q d\up.
\end{align*}
By (\ref{0622-5})  and (\ref{0622-6}), we obtation
\begin{align*}
\|Tf_{a,\gamma}\|_{L_\up^q}^q+\|Tf_{a,2\gamma}\|_{L_\up^q}^q
&\gtrsim \frac{1}{\om(S(a))^\frac{q}{p}}
\int_{F_{3,1}(a)}  |u_0|^q d\up
\end{align*}
and
\begin{align*}
\|Tf_{a,\gamma+1}\|_{L_\up^q}^q+\|Tf_{a,2(\gamma+1)}\|_{L_\up^q}^q
&\gtrsim \frac{1}{\om(S(a))^\frac{q}{p}}
\int_{F_{3,2}(a)}
 |u_0|^q d\up.
\end{align*}
Using (\ref{0302-1}) again, we have
\begin{align*}
\|T\|^q
&\gtrsim \frac{1}{\om(S(a))^\frac{q}{p}}
\int_{F_3(a)}
 |u_0|^q d\up.
\end{align*}

Therefore, as $|a|\to 1$, we obtain
\begin{align}\label{0625-1}
\|T\|^q\gtrsim \frac{1}{\om(S(a))^\frac{q}{p}}\int_{\vp^{-1}( E_s(a) )} |u_0|^q d\up.
\end{align}
So, we can choose a constant $\tau\in(0,1)$ such that (\ref{0625-1}) holds for all $\tau<|a|<1$.
When $|a|\leq \tau$, letting $h\equiv 1$,
\begin{align}\label{0624-3}
\|T\|^q
\gtrsim \|Th\|_{L_\up^q}^q
\gtrsim \frac{1}{\om(S(a))^\frac{q}{p}}\int_{\vp^{-1}( E_s(a) )} |u_0|^q d\up.
\end{align}
Therefore, by (\ref{0624-1}) and Theorem \ref{thB}, $$\|u_0C_\vp\|\lesssim \|T\|.$$
Thus, $\|u_1D_\psi^{(1)}\|\lesssim \|T\|.$

{\bf Part (b): $1<p\leq q<\infty$ and the lower estimate of $\|T\|_e$.}

In the previous process, when $|a|>\tau$, on every region $F_{i,j}(a)(i=1,2,3;j=1,2)$, we  choose $\lambda=\lambda_{a,i}$ and $\gamma=\gamma_{i,j}>\gamma_\#$ properly to obtain
\begin{align*}
\sum_{k=1}^2\|Tf_{\lambda_{a,i},k\gamma_{i,j}}\|_{L_\up^q}^q
=\|Tf_{\lambda_{a,i},\gamma_{i,j}}\|_{L_\up^q}^q+\|Tf_{\lambda_{a,i},2\gamma_{i,j}}\|_{L_\up^q}^q
&\gtrsim \frac{1}{\om(S(a))^\frac{q}{p}}
\int_{F_{i,j}(a)} |u_0|^q d\up.
\end{align*}
When $K:A_\om^p\to L_\up^q$ is compact, letting $I=\{1,2,3\}\times\{1,2\}\times\{1,2\}$, by (\ref{0302-1}), we have
\begin{align*}
\|T-K\|^q
\gtrsim&  \sum_{(i,j,k)\in I} \|(T-K)f_{\lambda_{a,i},k\gamma_{i,j}}\|_{L_\up^q}^q \\
\gtrsim& \sum_{(i,j,k)\in I}
\left(\|Tf_{\lambda_{a,i},k\gamma_{i,j}}\|_{L_\up^q}^q - \|Kf_{\lambda_{a,i},k\gamma_{i,j}}\|_{L_\up^q}^q\right)\\
\gtrsim& \frac{1}{\om(S(a))^\frac{q}{p}}\int_{\vp^{-1}( E_s(a) )}  |u_0|^q d\up
-\sum_{(i,j,k)\in I} \|Kf_{\lambda_{a,i},k\gamma_{i,j}}\|_{L_\up^q}^q.
\end{align*}
Since $K$ is arbitrary and compact, letting $|a|\to 1$, by Lemma \ref{0406-2},  we obtain
$$\|T\|_e^q \gtrsim
\limsup_{|a|\to 1}\frac{1}{\om(S(a))^\frac{q}{p}}\int_{\vp^{-1}( E_s(a) )} |u_0|^q d\up .
$$
Then,  (\ref{0624-1}) and Theorem \ref{0624-2} deduce
$$\|T\|_e\gtrsim \|u_0C_\vp\|_e .$$
Therefore, $$\|T\|_e\gtrsim \|u_1D^{(1)}_\psi\|_e .$$

{ \bf Part (c): $1<q<p<\infty$ and the lower estimates of $\|T\|$.}

 Let $\{r_k(t)\}$  be  Rademacher functions, $\vec{a}=\{a_k\}_{k=1}^\infty$ be a $\frac{s}{2}$-lattice in $\D$
 and  $\vec{\lambda}=\{\lambda_k\}$ be a sequence given by one of $\{a_k\}$, $\{a_{k,N}\}$ and $\{\ol{a_{k,N}}\}$.
 Here, $a_{k,N}=(a_k)_N$.
 We claim that $\{a_{k,N}\}_{k=1}^\infty$ is separated. Otherwise, for any $0<x<\frac{1}{3}$, there exist $a_{i,N}$ and $a_{j,N}$ such that
 $E_x(a_{i,N})\cap E_x(a_{j,N})\neq \O$. By \cite[Lemma 4.3]{CCKY2021ieot}, $\rho(a_i,a_j)<2(1+8N)x$.
 This is contradictory to $\vec{a}$ is a lattice.
Moreover, we can assume $\inf|a_k|>0$ and $\{|a_k|\}$ is a increasing sequence.
Therefore, there exists a $M>0$ such that $|a_k|>\tau$ if and only if $k>M$.
Here, the constant $\tau$ is that decided by (\ref{0625-1}).

 Let $y$ be one of $\gamma,\gamma+1,2\gamma$. By \cite[Thoerem 1]{PjRjSk2021jga},  for any $\vec{c}=\{c_k\}_{k=1}^\infty\in l^p$,
$\|g_{\vec{c},y,t,\vec{\lambda}}\|_{A_\om^p}\lesssim \|\vec{c}\|_{l^p}$,  where
$$g_{\vec{c},y,t,\vec{\lambda}}(z)=\sum_{k=1}^\infty c_k r_k(t)f_{\lambda_k,y}(z).  $$
Let $\chi_k$ be the  characteristic function of $E_s(a_k)$.
By Fubini's theorem,  Khinchin's inequality  and (\ref{0619-1}),  we have
 \begin{align}
\int_0^1 \|T  g_{\vec{c},y,t,\vec{\lambda}}\|_{L_\up^q}^qdt
=&\int_\D\int_0^1 \left|\sum_{k=1}^\infty c_k r_k(t) T f_{\lambda_k,y}  \right|^q   dt   d\up \nonumber\\
\approx& \int_\D\left( \sum_{k=1}^\infty |c_k|^2|T  f_{\lambda_k,y}|^2\right)^\frac{q}{2}  d\up
\geq \int_\D\left( \sum_{k=1}^\infty |c_k|^2|T  f_{\lambda_k,y}|^2(\chi_k\circ\vp)\right)^\frac{q}{2}  d\up \nonumber
\\
\approx & \sum_{k=1}^\infty |c_k|^q \int_{\vp^{-1}(E_s(a_k))} |T  f_{\lambda_k,y}|^qd\up \nonumber\\
\approx&  \sum_{k=1}^\infty
\frac{|c_k|^q }{\om(S(a_k))^\frac{q}{p}}\int_{\vp^{-1}(E_s(a_k))}\left|u_0+
\frac{y\ol{\lambda_k}u_1 (1-\ol{\lambda_k}\vp)^y}{(1-\ol{\lambda_k}\psi)^{y+1}}\right|^q
\frac{(1-|\lambda_k|^2)^{qy}}{|1-\ol{\lambda_k}\vp|^{qy}} d\up.  \nonumber
\end{align}
Then, as we did in  Part (a), when $|a_k|>\tau$, on every region $F_{i,j}(a_k)(i=1,2,3;j=1,2)$,
we can  choose $\lambda_k=\lambda_{a_k,i}$ and $y=\gamma_{i,j}>\gamma_\#$ properly to obtain
 \begin{align*}
\int_0^1 \|T  g_{\vec{c},y,t,\vec{\lambda}}\|_{L_\up^q}^qdt +  \int_0^1 \|T  g_{\vec{c},2y,t,\vec{\lambda}}\|_{L_\up^q}^qdt
\gtrsim&  \sum_{k=M}^\infty
\frac{|c_k|^q}{\om(S(a_k))^\frac{q}{p}}\int_{F_{i,j}(a_k)}\left|u_0\right|^q  d\up.
\end{align*}
Therefore,
\begin{align*}
\|T\|^q\|\vec{c}\|_{l^p}^q\gtrsim
\sum_{i=M}^\infty \frac{ |c_i|^q}{\om(S(a_i))^\frac{q}{p}}
\int_{\vp^{-1}(E_s(a_i))}  |u_0|^qd\up.
\end{align*}
By (\ref{0624-3}), we have
\begin{align*}
\|T\|^q\|\vec{c}\|_{l^p}^q
\gtrsim&  \sum_{i=1}^\infty
\frac{|c_i|^q}{\om(S(a_i))^\frac{q}{p}}\int_{\vp^{-1}(E_s(a_i))}|u_0|^q  d\up.
\end{align*}
Let $\eta_i=\frac{1}{\om(S(a_i))^\frac{q}{p}}\int_{\vp^{-1}(E_s(a_i))}|u_0|^q  d\up$.
Since $\{|c_i|^q\}\in l^\frac{p}{q}$ and $\|\{|c_i|^q\}\|_{l^\frac{p}{q}}=\|\vec{c}\|_{l^p}^q$,
we have $\{\eta_i\}\in (l^\frac{p}{q})^*=l^\frac{p}{p-q}$ and $\|\{\eta_i\}\|_{l^\frac{p}{p-q}}\leq \|T\|^q$.
Let the Borel measure $\mu$ on $\D$ be defined by
$$\mu(E)=\int_{\vp^{-1}(E)}|u_0|^qd\up.$$
We have
\begin{align*}
\int_\D \left(\frac{\mu(E_\frac{s}{2}(z))}{\om(S(z))}\right)^\frac{p}{p-q}\om(z)dA(z)
&\leq \sum_{i=1}^\infty \int_{E_\frac{s}{2}(a_i)} \left(\frac{\mu(E_\frac{s}{2}(z))}{\om(S(z))}\right)^\frac{p}{p-q}\om(z)dA(z)\\
&\lesssim \sum_{i=1}^\infty \left(\frac{\mu(E_s(a_i))}{\om(S(a_i))}\right)^\frac{p}{p-q} \om(E_{\frac{s}{2}}(a_i))\\
&\lesssim \sum_{i=1}^\infty \left(\frac{\mu(E_s(a_i))}{\om(S(a_i))^\frac{q}{p}}\right)^\frac{p}{p-q} \\
&\lesssim \|T\|^\frac{pq}{p-q}.
\end{align*}
By (\ref{0624-1}) and Theorem \ref{thB}, we have
$$\|u_0C_\vp\|\approx \|I_d\|_{A_\om^p\to L_\mu^q}
\approx \left(\int_\D \left(\frac{\mu(E_\frac{s}{2}(z))}{\om(S(z))}\right)^\frac{p}{p-q}\om(z)dA(z)\right)^\frac{p-q}{pq}\lesssim \|T\|.$$
Therefore, we also have
$$\|u_1D_\psi^{(1)}\|\lesssim \|T\|.$$
Moreover, by Theorem \ref{thB},   the boundedness of $T:A_\om^p\to L_\up^q(q<p)$ implies the compactness of it.
The proof is complete.
\end{proof}

\end{document}